\def\e{{\rm e}}
\def\eps{\varepsilon}
\def\d{{\rm d}}
\def\dist{{\rm dist}}
\def\R {\mathbb{R}}
\def\N {\mathbb{N}}
\def\H {{\mathcal H}}
\def\CC {J}
\def\V {{\mathcal V}}
\def\B {{\mathfrak B}}
\def\C {{\mathfrak C}}
\def\BB {{B}}
\def\D {{\rm dom}}
\def\DD {{\mathfrak D}}
\def\II {{\mathfrak I}}
\def\E {{\mathfrak E}}
\def\A {{\mathfrak A}}
\def \l {\langle}
\def \r {\rangle}
\def \and{\quad\text{and}\quad}
\def \au {\rm}
\def \ti {\it}
\def \jou {\rm}
\def \bk {\it}
\def \no#1#2#3 {{\bf #1} (#3), #2.}
\def \eds#1#2#3 {#1, #2, #3.}
\newtheorem{proposition}{Proposition}[section]
\newtheorem{theorem}[proposition]{Theorem}
\newtheorem{corollary}[proposition]{Corollary}
\newtheorem{lemma}[proposition]{Lemma}
\theoremstyle{definition}
\newtheorem{definition}[proposition]{Definition}
\newtheorem{remark}[proposition]{Remark}
\newtheorem{example}[proposition]{Example}
\numberwithin{equation}{section}
\title[Regularity of global attractors]
{On the regularity of global attractors}
\author[M. Conti, V. Pata]
{Monica Conti, Vittorino Pata}
\address{Politecnico di Milano - Dipartimento di Matematica ``F.\ Brioschi''
\newline\indent
20133 Milano, Italy}
\email{monica.conti@polimi.it}
\email{vittorino.pata@polimi.it}
\subjclass[2000]{34D45, 35B33, 35L05, 35M10, 47H20}
\keywords{Solution operators, semigroups, absorbing sets, global attractors, regularity,
strongly damped wave equation}
\begin{document}

\begin{abstract}
This note is focused on a novel
technique in order to establish
the boundedness in more regular spaces
for global attractors
of dissipative dynamical systems,
without appealing to uniform-in-time 
estimates.
As an application of the abstract result, the semigroup
generated by the
strongly damped wave equation
$$u_{tt}-\Delta u_t-\Delta u+\varphi(u)=f$$
with critical nonlinearity is considered, 
whose attractor is shown to possess
the optimal regularity.
\end{abstract}

\maketitle

\section{Introduction}

\noindent
The evolution of many physical phenomena is
ruled by a differential equation
generating a semigroup of operators $\{S(t)\}_{t\geq0}$, 
otherwise called a dynamical system, 
acting on a suitable infinite-dimensional Banach space $\H$. In
mathematical terms,
the presence of some dissipation mechanism in the model often reflects into
the existence of an {\it absorbing set} for the semigroup. This is, by definition,
a bounded set $\B_0\subset\H$ enjoying the following
property: for any $R\geq 0$, there exists
an {\it entering time} $t_R\geq 0$
such that 
$$S(t)\B\subset\B_0,\quad\forall t\geq t_R,$$
whenever $\B\subset\H$ with $\|\B\|_\H\leq R$.
An alternative notion is the one of an {\it attracting set}, namely,
a bounded set $\C\subset\H$ satisfying the relation
$$\lim_{t\to\infty}\big[\dist_\H(S(t)\B,\C)\big]=0,$$
for all bounded sets $\B\subset\H$, where $\dist_\H$ is
the Hausdorff semidistance in $\H$, given by
$$\dist_\H(\B_1,\B_2):=\sup_{x_1\in\B_1}\inf_{x_2\in\B_2}\|x_1-x_2\|_\H.$$
Clearly, an absorbing set is attracting as well, whereas
the existence of an attracting set implies the
existence of an absorbing one. On the other hand,
an attracting set is more likely to possess nice additional 
properties,
such as compactness and finite fractal dimension.
A relevant object
providing the ultimate description of the asymptotic dynamics
is the {\it global attractor}$\,$: the unique compact set $\A\subset\H$
which is at the same time attracting
and fully invariant under the action of $S(t)$, that is,
$$S(t)\A=\A,\quad\forall t\geq 0.$$
Roughly speaking, $\A$ is the smallest possible
set where
the evolution is eventually confined. Accordingly,
any possible further regularity of the attractor
is extremely important for
a better understanding
of the longterm behavior of the semigroup.
For more details on the theory of dynamical systems
and their attractors, we address the reader
to the classical textbooks \cite{BV,HAL,HAR,TEM}
(see also the more recent~\cite{CV,CL,MZ}).

A standard way to prove the existence
of the global attractor for a (strongly continuous)
semigroup is to exhibit a compact
attracting set.
In that case, $S(t)$ is called {\it asymptotically compact},
and the attractor $\A$ turns out to be the
$\omega$-limit set of any absorbing set $\B_0$:
$$\textstyle\A=\bigcap_{t\geq 0}\overline{\bigcup_{\tau\geq t} S(\tau)\B_0}.$$
This is usually attained through the limit
\begin{equation}
\label{PIPPO}
\lim_{t\to\infty}\big[\dist_\H(S(t)\B_0,\C(t))\big]=0,
\end{equation}
where, for every fixed $t$,
$\C(t)$ is a bounded subset (say, a closed ball about the origin)
of another Banach space $\V$ compactly embedded into $\H$.
Even though \eqref{PIPPO} yields the global attractor,
no conclusion can be drawn at this stage
on the regularity of $\A$. 
However, if $\V$ is
reflexive, so that closed balls of $\V$ are closed in $\H$,
and one is able to produce the uniform-in-time estimate
\begin{equation}
\label{PLUTO}
\sup_{t\geq 0}\|\C(t)\|_\V=\varrho<\infty,
\end{equation}
it is immediate to see that $\A$ is norm-bounded in $\V$ by the 
very constant $\varrho$.

As a rule, verifying \eqref{PIPPO} in concrete situations
requires a reasonable effort; on the contrary,
showing the uniform bound~\eqref{PLUTO}, and in turn the $\V$-boundedness
of $\A$, is generally a much harder task, if not out of reach.
A paradigmatic example is the semigroup of
the damped wave equation
with a critical nonlinearity lacking monotonicity properties, 
whose global attractor
in the weak-energy space was found in \cite{ACH}, but
its optimal regularity has been obtained
only several years later~\cite{GP,ZEL}.

The aim of this note is to present a new 
and easy to handle technique apt
to establish
the boundedness of $\A$ in the higher space $\V$ 
{\it without} making use of
the uniform estimate~\eqref{PLUTO}, proving that,
to some extent, \eqref{PIPPO} alone suffices
(see Section~3).
Actually, we say more: we find a ball $\C$ of $\V$
attracting exponentially fast
all bounded subsets of $\H$; precisely,
$$\dist_\H(S(t)\B,\C)\leq \CC\big(\|\B\|_\H\big)\e^{-\omega t},$$
for some $\omega>0$
and some increasing function $\CC$.

\begin{remark}
A sufficiently regular exponentially attracting
sets is crucial in order to demonstrate the existence of
an {\it exponential attractor}$\,$:
a compact set $\E\subset\H$ of
finite fractal dimension and
positively invariant
($S(t)\E\subset\E$ for all $t\geq 0$),
which attracts bounded subsets of $\H$ at an
exponential rate,
contrary to the global attractor, whose attraction rate
can be arbitrarily slow and not measurable in
terms of the structural parameters of the problem
\cite{EFNT,EMZ,EMZ1,MZ}.
In this respect, an exponential attractor happens to be
more helpful than the global one for practical purposes,
e.g.\ numerical simulations.
\end{remark}

As an application, in the final Section~4, we consider the dynamical system
generated by the strongly damped wave equation
with a nonlinearity of critical growth, providing a simple
proof of the optimal regularity
of the related attractor.

\section{A Basic Inequality}

\noindent
We begin with some notation.
Given a Banach space $\V$ and $R\geq 0$, 
we set
$$\BB_\V(R)=\{z\in\V : \|z\|_{\V}\leq R\}.$$
We denote by $\II$ the space of continuous increasing functions $\CC:\R^+\to\R^+$,
and by $\DD$ the space of continuous decreasing functions $\beta:\R^+\to\R^+$
such that 
$\beta(\infty)<1$.

\begin{definition}
A {\it solution operator} on $\V$
is a family of
maps $U(t):\V\to \V$,
depending on $t\geq 0$,
satisfying the ``initial condition"
$$U(0)z=z,\quad\forall z\in\V.$$ 
The family $U(t)$ is called a {\it semigroup} if
it fulfills the further
property
$$U(t+\tau)=U(t)U(\tau),\quad\forall t,\tau\geq 0.$$
\end{definition}

\begin{remark}
The above maps are closely related to the study of
differential equations in Banach spaces:
suppose that, for all initial data $z\in\V$,
there is
a unique solution (in some weak sense) $\zeta:\R^+\to\V$
to the Cauchy problem
$$\begin{cases}
\frac{\d}{\d t}\zeta(t)=A(\zeta(t),t),\\
\zeta(0)=z,
\end{cases}
$$
where $A(\cdot,t)$ is a family of operators
densely defined on $\V$. Then,
we can write 
$$\zeta(t)=U(t)z.$$
When the system is autonomous, i.e.\ $A$ does
not depend explicitly on time, $U(t)$ is a semigroup.
\end{remark}

The inequality of the following lemma, although not more than
a trivial observation, is really the key idea
of the paper.

\begin{lemma}
\label{TEC}
Let $U(t)$ be a solution operator on a Banach space $\V$.
Assume that
$$\|U(t)z\|_\V\leq \beta(t)\|z\|_\V+\CC(t),\quad\forall z\in\V,
$$
for some $\beta\in\DD$, $\CC\in\II$. 
Then, for any 
$t_\star>0$ large enough so that $\beta_\star:=\beta(t_\star)<1$,
\begin{equation}
\label{UNO}
\textstyle\|U(t_\star)z\|_\V\leq \beta_\star\|z\|_\V+\frac12(1-\beta_\star)R_\star,
\end{equation}
with
$R_\star=\frac{2}{1-\beta_\star}\CC(t_\star)$.
\end{lemma}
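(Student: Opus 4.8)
The plan is to take the hypothesis inequality and just evaluate it at the specific time $t_\star$, then algebraically repackage the resulting constant $\CC(t_\star)$ into the form demanded by \eqref{UNO}. The hypothesis gives, for all $z\in\V$,
$$\|U(t_\star)z\|_\V\leq \beta(t_\star)\|z\|_\V+\CC(t_\star)=\beta_\star\|z\|_\V+\CC(t_\star).$$
So the only thing to verify is that the constant term $\CC(t_\star)$ can be rewritten as $\tfrac12(1-\beta_\star)R_\star$, and that is precisely how $R_\star$ is defined.

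Concretely, with $R_\star:=\frac{2}{1-\beta_\star}\CC(t_\star)$ one computes
$$\tfrac12(1-\beta_\star)R_\star=\tfrac12(1-\beta_\star)\cdot\frac{2}{1-\beta_\star}\CC(t_\star)=\CC(t_\star),$$
where the cancellation is legitimate because $t_\star$ is chosen large enough that $\beta_\star=\beta(t_\star)<1$, so $1-\beta_\star>0$ and division by it is well-defined. Substituting this identity back into the evaluated hypothesis yields exactly \eqref{UNO}, completing the argument.

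There is essentially no obstacle here; the lemma is, as the authors themselves note, a trivial observation. The only point deserving a word of care is the nonemptiness of the range of admissible $t_\star$: since $\beta\in\DD$ means $\beta$ is continuous, decreasing, and satisfies $\beta(\infty)<1$, there does exist a threshold beyond which $\beta(t)<1$, so such a $t_\star$ can indeed be selected. I would state that observation in one line and then present the two-line computation above.
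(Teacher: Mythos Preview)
Your argument is correct and matches the paper's treatment: the authors do not give a separate proof of this lemma, explicitly calling it ``not more than a trivial observation,'' and your two-line substitution-and-cancellation is exactly the intended content. Your extra remark about the existence of an admissible $t_\star$ is a fair clarification but not strictly needed, since the lemma is phrased conditionally (``for any $t_\star>0$ large enough so that $\beta_\star<1$'').
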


Here is a first remarkable consequence of \eqref{UNO}.

\begin{corollary}
\label{CORABS}
If $U(t)$ is also a semigroup, then it possesses
an absorbing set.
\end{corollary}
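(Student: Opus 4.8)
The plan is to iterate the contraction estimate \eqref{UNO} along the semigroup. Fix $t_\star$ as in Lemma~\ref{TEC}, and abbreviate $\beta_\star=\beta(t_\star)<1$ together with $R_\star=\frac{2}{1-\beta_\star}\CC(t_\star)$. Since $\frac12(1+\beta_\star)<1$, a glance at \eqref{UNO} shows that $U(t_\star)$ maps $\BB_\V(R_\star)$ into itself; this already singles out a ball of $\V$ as the natural candidate absorbing set and dictates its radius.

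First I would iterate. Using $U(nt_\star)=U(t_\star)^n$ and applying \eqref{UNO} exactly $n$ times, an elementary induction yields, for every $z$ with $\|z\|_\V\le R$,
$$\|U(nt_\star)z\|_\V\le \beta_\star^n R+\tfrac12(1-\beta_\star)R_\star\sum_{k=0}^{n-1}\beta_\star^k\le \beta_\star^n R+\tfrac12 R_\star.$$
As $\beta_\star<1$, one can select an integer $n_R$, depending only on $R$, for which $\beta_\star^{n_R}R\le\frac12 R_\star$, and hence $\|U(nt_\star)z\|_\V\le R_\star$ for all $n\ge n_R$. Thus the orbit of any bounded set of radius $R$ eventually lies in $\BB_\V(R_\star)$ at the discrete instants $nt_\star$.

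The remaining, and only mildly delicate, step is to upgrade this to an entering time valid for all $t$. Given $t\ge n_R t_\star$, I would split $t=nt_\star+s$ with $n\ge n_R$ and $0\le s<t_\star$, and invoke the semigroup law $U(t)z=U(s)\big(U(nt_\star)z\big)$. Setting $w=U(nt_\star)z$, so that $\|w\|_\V\le R_\star$, the basic hypothesis applied over the short interval of length $s$, combined with the monotonicity of $\beta$ and $\CC$, gives
$$\|U(t)z\|_\V\le \beta(s)\|w\|_\V+\CC(s)\le \beta(0)R_\star+\CC(t_\star)=:R_0.$$
Since $R_0$ depends neither on $t$ nor on $R$, the ball $\BB_\V(R_0)$ is absorbing, with entering time $t_R=n_R t_\star$. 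I expect this last decomposition to be the main point to get right: the iteration by itself controls the trajectory only on the lattice $t_\star\N$, and the leftover flow over times $s<t_\star$ must be absorbed uniformly. This is precisely what membership of $\beta$ in $\DD$ and of $\CC$ in $\II$ secures, and it is exactly here that the semigroup property is indispensable.
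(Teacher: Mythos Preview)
Your proof is correct and follows essentially the same route as the paper: iterate \eqref{UNO} along the lattice $t_\star\N$ via the semigroup property, then bridge the gap to arbitrary $t$ by writing $t=nt_\star+s$ and using the monotonicity of $\beta$ and $\CC$. The only cosmetic difference is that the paper separates the cases $\|z\|_\V\le R_\star$ and $\|z\|_\V>R_\star$ (exploiting the invariance of $\BB_\V(R_\star)$ under $U(t_\star)$ first), whereas you iterate directly with the geometric-series bound; your absorbing radius $R_0=\beta(0)R_\star+\CC(t_\star)$ coincides with the paper's $\kappa R_\star$.
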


\begin{proof}
If $\|z\|_\V\leq R_\star$, from~\eqref{UNO} and the semigroup properties
we readily get
$$\sup_{n\in\N}\|U(nt_\star)z\|_\V\leq R_\star.
$$
For an arbitrary $t\geq 0$, we write
$t=n t_\star+\tau$, with $n\in\N$
and $\tau\in[0,t_\star)$.
This yields
$$\|U(t)z\|_\V=\|U(\tau)U(nt_\star)z\|_\V 
\leq \beta(\tau)\|U(nt_\star)z\|_\V+\CC(\tau)
\leq \beta(0)R_\star+\CC(t_\star)=\kappa R_\star,
$$
having set (necessarily, $\beta(0)\geq 1$)
\begin{equation}
\label{kappa}
\textstyle \kappa=\beta(0)+\frac12(1-\beta_\star)>1.
\end{equation}
Thus, we have proved the implication
$$\|z\|_\V\leq R_\star\quad\Rightarrow
\quad \sup_{t\geq 0}\|U(t)z\|_\V\leq \kappa R_\star.
$$
Conversely, if $\|z\|_\V=R>R_\star$, we infer from \eqref{UNO} that
$$\textstyle\|U(t_\star)z\|_\V\leq \frac12(1+\beta_\star)R,
$$
and in light of the  
semigroup properties,
$$\|U(n_R t_\star)z\|_\V\leq R_\star,$$
up to taking
$$n_R=\textstyle 1
+\big\lfloor\frac{\ln R-\ln R_\star}{\ln 2-\ln(1+\beta_\star)}\big\rfloor.$$
In summary,
$$U(t)\BB_\V(R)\subset \BB_\V(\kappa R_\star),\quad\forall t\geq t_R,$$
with an entering time
$$
t_R=
\begin{cases}
0 &\text{if }R\leq R_\star,\\
n_R\,t_\star &\text{if }R> R_\star.
\end{cases}
$$
In other words, the ball $\BB_\V(\kappa R_\star)$ is an absorbing set for $U(t)$.
\end{proof}

\section{The Abstract Theorem}

\noindent
Throughout the section,
let $S(t)$ be a {\it semigroup of closed operators} 
acting on a Banach space $\H$
(cf.\ \cite{PZ}). This is a semigroup for which
the implication
$$x_n\to x,\,\,S(t)x_n\to \xi\quad\Rightarrow
\quad \xi=S(t)x
$$
holds at any fixed time $t\geq 0$, whenever $x_n,x,\xi\in\H$.

\begin{remark}
A {\it strongly continuous semigroup}, i.e.\
a semigroup enjoying the continuity
$$S(t)\in C(\H,\H),\quad\forall t\geq 0,$$
is a particular instance of a semigroup of closed operators.
\end{remark}

The semigroup $S(t)$ is also required to possess
an absorbing set $\B_0\subset\H$. Without loss of generality, 
$$\B_0=\BB_\H(R_0),\quad R_0>0.$$
Finally, let $\V$ be a reflexive Banach space compactly embedded
into $\H$.

\smallskip
Within the above assumptions,
our main result reads

\begin{theorem}
\label{MAIN}
For every $x\in\B_0$, let there exist two solution operators
$V_x(t)$ and $U_x(t)$ on $\H$
with the following properties:
\begin{itemize}
\item[(i)] For any two vectors $y,z\in\H$ satisfying $y+z=x$,
$$S(t)x=V_x(t)y+U_x(t)z.
$$
\item[(ii)] There is $\alpha\in\DD$ such that
$$\sup_{x\in\B_0}\|V_x(t)y\|_\H\leq \alpha(t)\|y\|_\H,\quad\forall y\in\B_0.
$$
\item[(iii)] There are $\beta\in\DD$ and $\CC\in\II$ such that
$$\sup_{x\in\B_0}\|U_x(t)z\|_\V\leq \beta(t)\|z\|_\V+\CC(t),\quad\forall z\in\V.
$$
\end{itemize}
Then, $\B_0$ is exponentially attracted by a closed ball
of $\V$; namely, there exist (strictly) positive constants $\varrho,K,\omega$ 
such that
\begin{equation}
\label{EAP}
\dist_\H(S(t)\B_0,\BB_\V(\varrho))\leq K\e^{-\omega t}.
\end{equation}
\end{theorem}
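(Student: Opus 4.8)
The plan is to sample the orbit at integer multiples of a single fixed time $t_\star$ and to propagate, step by step, a splitting of $S(nt_\star)x$ into an $\H$-part that decays geometrically and a $\V$-part that stays bounded. First I would fix $t_\star$ so large that, simultaneously, $\alpha_\star:=\alpha(t_\star)<1$ and $\beta_\star:=\beta(t_\star)<1$ (possible since $\alpha,\beta\in\DD$), and moreover so that $t_\star$ exceeds the entering time $t_{R_0}$ of $\B_0$ into itself. This last requirement is what guarantees $S(t_\star)\B_0\subset\B_0$, whence $S(nt_\star)x\in\B_0$ for every $n\in\N$ and every $x\in\B_0$ by iterating the semigroup property. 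This is the point on which the whole construction rests: the decomposition (i) and the estimates (ii)--(iii) are available only at base points lying in $\B_0$, so the orbit must revisit $\B_0$ at each sampling time.

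Next I would build the splitting recursively. Set $y_0=x$ and $z_0=0$, so that $x=y_0+z_0$ with $y_0\in\B_0$ and $z_0\in\V$. Assuming $S(nt_\star)x=y_n+z_n$ with $y_n\in\B_0$ and $z_n\in\V$, apply (i) at time $t_\star$ to the base point $S(nt_\star)x\in\B_0$:
\begin{equation*}
S((n+1)t_\star)x=V_{S(nt_\star)x}(t_\star)y_n+U_{S(nt_\star)x}(t_\star)z_n=:y_{n+1}+z_{n+1}.
\end{equation*}
Property (ii) gives $\|y_{n+1}\|_\H\leq\alpha_\star\|y_n\|_\H$, hence $\|y_n\|_\H\leq\alpha_\star^n R_0\leq R_0$; in particular $y_n\in\B_0$, so the induction is consistent. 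Property (iii), which at time $t_\star$ is exactly the hypothesis of Lemma~\ref{TEC}, gives $\|z_{n+1}\|_\V\leq\beta_\star\|z_n\|_\V+\CC(t_\star)$; since $z_0=0$, summing the geometric series (this is the content of \eqref{UNO}) yields $\|z_n\|_\V\leq R_\star/2$ for all $n$, with $R_\star=\frac{2}{1-\beta_\star}\CC(t_\star)$.

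At the sampled times this already delivers, for every $x\in\B_0$,
\begin{equation*}
\dist_\H\big(S(nt_\star)x,\BB_\V(R_\star/2)\big)\leq\|y_n\|_\H\leq\alpha_\star^n R_0,
\end{equation*}
an exponential bound with rate $\omega:=-\frac{1}{t_\star}\ln\alpha_\star>0$. To reach arbitrary $t$, I would write $t=nt_\star+\tau$ with $\tau\in[0,t_\star)$ and apply (i) once more, now at time $\tau$, to the base point $S(nt_\star)x\in\B_0$ carrying the same splitting $y_n+z_n$:
\begin{equation*}
S(t)x=V_{S(nt_\star)x}(\tau)y_n+U_{S(nt_\star)x}(\tau)z_n.
\end{equation*}
By monotonicity of $\alpha,\beta,\CC$ one has $\alpha(\tau)\leq\alpha(0)$, $\beta(\tau)\leq\beta(0)$ and $\CC(\tau)\leq\CC(t_\star)$, so the second summand lies in $\BB_\V(\varrho)$ with $\varrho:=\beta(0)R_\star/2+\CC(t_\star)$, while the first is bounded in $\H$ by $\alpha(0)\alpha_\star^n R_0$. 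Since $\alpha_\star^n=\e^{-\omega(t-\tau)}\leq\e^{\omega t_\star}\e^{-\omega t}$, this produces \eqref{EAP} with the stated $\varrho$, the above $\omega$, and $K=\alpha(0)R_0\e^{\omega t_\star}$.

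The only genuine obstacle, flagged in the first paragraph, is keeping every base point inside $\B_0$ so that the decomposition and its estimates remain legitimate at each iterate; this is secured once $t_\star\ge t_{R_0}$, after which everything is routine. It is worth stressing that the argument never needs $V_x,U_x$ to be semigroups, nor any uniform-in-time $\V$-estimate in the spirit of \eqref{PLUTO}: the splitting is regenerated afresh at every time $t_\star$, the output $V$- and $U$-parts of one step serving as the $y$- and $z$-inputs of the next.
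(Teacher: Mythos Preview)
Your argument is correct and follows essentially the same route as the paper's own proof: fix $t_\star$ so that $S(t_\star)\B_0\subset\B_0$ and both $\alpha_\star,\beta_\star<1$, build the splitting $S(nt_\star)x=y_n+z_n$ by induction via $y_{n+1}=V_{x_n}(t_\star)y_n$ and $z_{n+1}=U_{x_n}(t_\star)z_n$, and then handle the residual $\tau\in[0,t_\star)$ with one more application of (i). The only difference is cosmetic: by summing the geometric series from $z_0=0$ you obtain $\|z_n\|_\V\leq R_\star/2$, whereas the paper argues the invariance $\|z_m\|_\V\leq R_\star\Rightarrow\|z_{m+1}\|_\V\leq\frac12(1+\beta_\star)R_\star\leq R_\star$ and so carries the slightly larger bound $R_\star$; your final constants $\varrho,K,\omega$ match the paper's up to this harmless factor.
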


As a byproduct, we have

\begin{corollary}
\label{CORRI}
The semigroup $S(t)$ possesses the global attractor $\A$ bounded in $\V$.
\end{corollary}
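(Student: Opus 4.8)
The plan is to deduce everything from the exponential attraction estimate \eqref{EAP} of Theorem~\ref{MAIN}, together with the compactness transferred from $\V$ to $\H$. First I would record that the closed ball $\BB_\V(\varrho)$ is a \emph{compact} subset of $\H$. Indeed, the embedding $\V\hookrightarrow\H$ being compact, $\BB_\V(\varrho)$ is precompact in $\H$; and since $\V$ is reflexive, the ball is weakly sequentially compact in $\V$, which forces it to be closed in $\H$ as well. Any sequence in $\BB_\V(\varrho)$ converging in $\H$ admits a $\V$-weakly convergent subsequence, whose limit still lies in $\BB_\V(\varrho)$ (the ball being weakly closed), and the compact embedding identifies this weak limit with the $\H$-limit. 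Hence $\BB_\V(\varrho)$ is compact in $\H$.

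Next I would upgrade \eqref{EAP} from the absorbing set $\B_0$ to an arbitrary bounded set $\B\subset\H$. Since $\B_0$ is absorbing, there is an entering time $t_\B$ with $S(t_\B)\B\subset\B_0$, whence $S(t)\B=S(t-t_\B)S(t_\B)\B\subset S(t-t_\B)\B_0$ for every $t\geq t_\B$; feeding this into \eqref{EAP} yields
$$\dist_\H(S(t)\B,\BB_\V(\varrho))\leq K\e^{-\omega(t-t_\B)}\to 0\quad(t\to\infty).$$
Thus $\BB_\V(\varrho)$ is a compact set attracting every bounded subset of $\H$. As $S(t)$ is a semigroup of closed operators admitting such a compact attracting set, the existence theory for closed semigroups (cf.\ \cite{PZ}) provides the global attractor
$$\textstyle\A=\bigcap_{t\geq0}\overline{\bigcup_{\tau\geq t}S(\tau)\B_0},$$
which is compact, fully invariant, and attracting for all bounded sets.

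Finally, the $\V$-boundedness would come from playing the full invariance against the \emph{exponential} rate. Since $\A$ is bounded and invariant while $\B_0$ is absorbing, $\A\subset\B_0$; hence $\A=S(t)\A\subset S(t)\B_0$ for every $t\geq0$. Estimate \eqref{EAP} and the monotonicity of $\dist_\H$ in its first argument then give
$$\dist_\H(\A,\BB_\V(\varrho))=\dist_\H(S(t)\A,\BB_\V(\varrho))\leq\dist_\H(S(t)\B_0,\BB_\V(\varrho))\leq K\e^{-\omega t}$$
for every $t\geq0$. Letting $t\to\infty$ forces $\dist_\H(\A,\BB_\V(\varrho))=0$, and since $\BB_\V(\varrho)$ is $\H$-closed this means $\A\subset\BB_\V(\varrho)$; in particular $\|\A\|_\V\leq\varrho$, so $\A$ is bounded in $\V$.

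I expect the only genuinely delicate point to be the compactness (as opposed to mere precompactness) of $\BB_\V(\varrho)$ in $\H$, that is, its $\H$-closedness, which is precisely where the reflexivity of $\V$ enters. Everything else is a routine blend of the absorbing property, the semigroup law, and the slick use of the identity $\A=S(t)\A$ to convert a time-decaying bound into the time-independent inclusion $\A\subset\BB_\V(\varrho)$.
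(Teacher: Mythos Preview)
Your argument is correct and follows essentially the approach the paper indicates: the paper's own justification of the corollary is just a two-line appeal to \cite{TEM} and \cite{PZ}, while you have spelled out the details---compactness of $\BB_\V(\varrho)$ in $\H$ via reflexivity plus the compact embedding, existence of $\A$ from a compact attracting set, and the inclusion $\A\subset\BB_\V(\varrho)$ from full invariance and $\dist_\H(\A,\BB_\V(\varrho))=0$. The last step, exploiting $\A=S(t)\A\subset S(t)\B_0$ and letting $t\to\infty$, is exactly the maneuver the paper itself writes out in the proof of Proposition~\ref{MAIN2}.
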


Indeed, as mentioned in the introduction, if $S(t)$ is 
strongly continuous it is well known that~\eqref{EAP} 
implies the existence of the global attractor $\A$
subject to the bound $\|\A\|_\V\leq \varrho$
(see e.g.\ \cite{TEM}).
Same thing if $S(t)$ is only
a semigroup of closed operators (see \cite{PZ}).

\begin{proof}[Proof of Theorem \ref{MAIN}]
Let $x\in\B_0$ be arbitrarily fixed, and select 
$t_\star>0$ large enough such that
$$S(t_\star)\B_0\subset\B_0,$$
and
$$\alpha_\star:=\alpha(t_\star)<1,\quad
\beta_\star:=\beta(t_\star)<1.$$
For every $n\in\N$, we claim that the vector 
$$x_n:=S(nt_\star)x\in\B_0$$
admits the decomposition
$$
x_n=y_n+z_n,
$$
for some $y_n,z_n$ satisfying the bounds
$$\textstyle
\|y_n\|_\H\leq \alpha_\star^n R_0,\quad
\|z_n\|_\V\leq R_\star:=\frac{2}{1-\beta_\star} \CC(t_\star).
$$
We proceed by induction on $n\in\N$.
The case $n=0$ is verified by $y_0=x$, $z_0=0$.
Assume the claim true for all $n\leq m\in\N$.
Choosing
$$y_{m+1}=V_{x_m}(t_\star)y_m\quad\text{and}\quad
z_{m+1}=U_{x_m}(t_\star)z_m,
$$
we obtain the equality
$$x_{m+1}=S((m+1)t_\star)x=S(t_\star)x_m=y_{m+1}+z_{m+1}.$$
Observing that $y_m\in\B_0$, and using~\eqref{UNO},
we derive the estimates 
\begin{align*}
\|y_{m+1}\|_\H&=\|V_{x_m}(t_\star)y_m\|_\H\leq \alpha_\star
\|y_m\|_\H\leq \alpha_\star^{m+1}R_0,\\
\|z_{m+1}\|_\V&=\|U_{x_m}(t_\star)z_m\|_\V\leq
\textstyle\frac12(1+\beta_\star)R_\star\leq R_\star.
\end{align*}
This proves the claim.
Let then $t\geq 0$. 
Writing $t=n t_\star+\tau$, with $n\in\N$
and $\tau\in[0,t_\star)$,
$$S(t)x=S(\tau)x_n=V_{x_n}(\tau)y_n+U_{x_n}(\tau)z_n,$$
and
\begin{align*}
\|V_{x_n}(\tau)y_n\|_\H&\leq \alpha(0)\|y_n\|_\H
\leq \alpha(0)\alpha_\star^{-1}\alpha_\star^{t/t_\star}R_0,\\
\|U_{x_n}(\tau)z_n\|_\V&\leq\beta(0)\|z_n\|_\V+\CC(t_\star)\leq \kappa R_\star,
\end{align*}
with $\kappa>1$ as in \eqref{kappa}.
Thus, setting 
$$\varrho=\kappa R_\star,\quad K=\alpha(0)\alpha_\star^{-1}R_0,
\quad\omega=t_\star^{-1}\ln\alpha_\star^{-1},$$
the required exponential attraction property~\eqref{EAP} follows.
\end{proof}

Incidentally, Corollary~\ref{CORRI} is still true under
weaker hypotheses.

\begin{proposition}
\label{MAIN2}
Let $t_\star>0$ be
such that $S(t_\star)\B_0\subset\B_0$.
For every $x\in\B_0$, let there exist two operators
$V_{x}$ and $U_{x}$ on $\H$
with the following properties:
\begin{itemize}
\item[(i)] For any two vectors $y,z\in\H$ satisfying $y+z=x$,
$$S(t_\star)x=V_{x}y+U_{x}z.
$$
\item[(ii)] There is $\alpha_\star<1$ such that
$$\sup_{x\in\B_0}\|V_{x}y\|_\H\leq \alpha_\star\|y\|_\H,\quad\forall y\in\B_0.
$$
\item[(iii)] There are $\beta_\star<1$ and $\CC_\star\geq 0$ such that
$$\sup_{x\in\B_0}\|U_{x}z\|_\V\leq \beta_\star\|z\|_\V+\CC_\star,\quad\forall z\in\V.
$$
\end{itemize}
Then, $S(t)$ possesses the global attractor $\A$ bounded in $\V$.
\end{proposition}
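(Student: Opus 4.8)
The plan is to reproduce the inductive decomposition from the proof of Theorem~\ref{MAIN}, but only along the discrete times $nt_\star$ — which is all that the single‑time hypotheses (i)--(iii) now grant — and then to upgrade the resulting \emph{discrete} attraction into the existence of the continuous‑time global attractor by exploiting the absorbing property of $\B_0$.

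First I would set $R_\star=\frac{2\CC_\star}{1-\beta_\star}$ and $\K=\BB_\V(R_\star)$, recording that, since $\V$ is reflexive and compactly embedded into $\H$, the ball $\K$ is a compact subset of $\H$. Fixing $x\in\B_0$ and writing $x_n=S(nt_\star)x$ (so that $x_n\in\B_0$ for every $n$, as $S(t_\star)\B_0\subset\B_0$ iterates), I would prove by induction that $x_n=y_n+z_n$ with $\|y_n\|_\H\le\alpha_\star^n R_0$ and $z_n\in\K$. The base case is $y_0=x,\ z_0=0$; for the inductive step I would take $y_{n+1}=V_{x_n}y_n$ and $z_{n+1}=U_{x_n}z_n$, so that (i) yields $x_{n+1}=S(t_\star)x_n=y_{n+1}+z_{n+1}$, while (ii) gives $\|y_{n+1}\|_\H\le\alpha_\star\|y_n\|_\H$ (legitimate, since $\|y_n\|_\H\le R_0$ forces $y_n\in\B_0$) and (iii) gives $\|z_{n+1}\|_\V\le\beta_\star R_\star+\CC_\star=\frac{1+\beta_\star}{2}R_\star\le R_\star$. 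This is word for word the induction of Theorem~\ref{MAIN}, now read only along the grid $\{nt_\star\}$, and it produces the discrete bound
\begin{equation*}
\dist_\H\big(S(nt_\star)\B_0,\K\big)\le\alpha_\star^{\,n} R_0\xrightarrow[n\to\infty]{}0.
\end{equation*}

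The decisive step — and the one I expect to be the genuine obstacle — is to pass from control along multiples of $t_\star$ to a statement valid for all $t\to\infty$, since (i)--(iii) say nothing about the action of $S(\tau)$ at the fractional times $\tau\in(0,t_\star)$. My plan is to absorb such fractional parts into the initial datum rather than to track them at the end. Given $t_n\to\infty$ and data $x_n\in\B_0$, let $t_{R_0}$ be the entering time of $\B_0$ and write $t_n=s_n+m_n t_\star$ with $s_n\in[t_{R_0},t_{R_0}+t_\star)$ and $m_n\to\infty$; then $\hat x_n:=S(s_n)x_n\in\B_0$ \emph{precisely because} $s_n\ge t_{R_0}$, so the semigroup identity $S(t_n)x_n=S(m_n t_\star)\hat x_n$ brings us back exactly to the grid situation. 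Applying the induction to $\hat x_n\in\B_0$ decomposes $S(t_n)x_n=y^{(n)}+z^{(n)}$ with $\|y^{(n)}\|_\H\le\alpha_\star^{\,m_n}R_0\to0$ and $z^{(n)}\in\K$; compactness of $\K$ in $\H$ then extracts a subsequence with $z^{(n_k)}\to\bar z\in\K$, whence $S(t_{n_k})x_{n_k}\to\bar z\in\BB_\V(R_\star)$.

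This single computation delivers everything: it shows that $S(t)$ is asymptotically compact and that every limit point of orbits of $\B_0$ lies in $\BB_\V(R_\star)$. I would then conclude, exactly as for Corollary~\ref{CORRI} (cf.\ \cite{PZ,TEM}), that $\A:=\omega(\B_0)$ is the global attractor: nonemptiness, compactness and attraction of $\B_0$ follow from asymptotic compactness, attraction of arbitrary bounded sets follows from the absorbing property of $\B_0$, and full invariance $S(t)\A=\A$ follows from the closedness of the operators $S(t)$. Finally, since $\omega(\B_0)\subset\BB_\V(R_\star)$ and the latter is closed in $\H$ by reflexivity of $\V$, the attractor satisfies $\|\A\|_\V\le R_\star$, which is the asserted $\V$-boundedness.
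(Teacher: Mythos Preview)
Your argument is correct. The inductive decomposition along the grid $\{nt_\star\}$ coincides verbatim with the paper's proof. The difference lies only in how one passes from the discrete estimate
\[
\dist_\H\big(S(nt_\star)\B_0,\BB_\V(R_\star)\big)\le \alpha_\star^{\,n}R_0
\]
to the full conclusion. The paper simply cites \cite{PZ} to assert that this discrete-time attraction to a compact set already yields the global attractor $\A$, and then derives the $\V$-bound in one line from full invariance: $\A=S(nt_\star)\A\subset S(nt_\star)\B_0$, whence $\dist_\H(\A,\BB_\V(R_\star))=0$ and so $\A\subset\BB_\V(R_\star)$. You instead reprove asymptotic compactness by hand, via the neat device of placing the fractional part of the time at the \emph{start} of the orbit (so that the shifted datum re-enters $\B_0$ by the absorbing property), which delivers existence of $\A$ and the inclusion $\omega(\B_0)\subset\BB_\V(R_\star)$ in a single stroke. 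Your route is more self-contained and makes no appeal to the external result in \cite{PZ}; the paper's route is shorter and isolates the invariance identity $\A=S(nt_\star)\A$ as the sole mechanism producing the regularity bound.
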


\begin{proof}
Let $x\in\B_0$ be fixed.
Arguing exactly as in the proof of Theorem~\ref{MAIN},
$$
S(nt_\star)x=y_n+z_n,\quad\forall n\in\N,
$$
with 
$$
\|y_n\|_\H\leq \alpha_\star^n R_0,\quad
\textstyle \|z_n\|_\V\leq R_\star:=\frac{2}{1-\beta_\star} \CC_\star.
$$
Therefore,
$$
\dist_\H(S(nt_\star)\B_0,\BB_\V(R_\star))\leq \alpha_\star^n R_0\to 0,
$$
which is enough to establish the existence of $\A$
(cf.\ \cite{PZ}).
Since the attractor is fully invariant and contained in the absorbing set $\B_0$,
$$\A=S(nt_\star)\A\subset S(nt_\star)\B_0.$$
Hence, letting $n\to\infty$, we conclude that
$$
\dist_\H(\A,\BB_\V(R_\star))=0,
$$
yielding the set inclusion
$\A\subset\BB_\V(R_\star)$.
\end{proof}

In concrete cases, 
a commonly adopted strategy leading to
the global attractor $\A$ is finding
a decomposition
\begin{equation}
\label{DECO}
S(t)x=\eta(t;x)+\zeta(t;x),\quad\forall x\in\B_0,
\end{equation}
such that, for some function $\mu$ vanishing at infinity
and some $\CC\in\II$,
\begin{align}
\label{ALFA}
\sup_{x\in\B_0}\|\eta(t;x)\|_\H &\leq \mu(t),\\
\label{BETA}
\sup_{x\in\B_0}\|\zeta(t;x)\|_\V &\leq \CC(t).
\end{align}
However, in order to deduce 
the $\V$-boundedness of $\A$, estimate \eqref{BETA}
need be uniform in time,
same as requiring that
\begin{equation}
\label{UNIF}
\lim_{t\to\infty}\CC(t)=\varrho<\infty.
\end{equation}
Let us first dwell on a simple, albeit quite interesting, situation.

\begin{example}
\label{EX}
For two (linear and nonlinear, respectively) operators $A_0,A_1$,
assume that the differential equation
$$
\frac{\d}{\d t}\xi=A_0\xi+A_1(\xi)
$$
generates a semigroup $S(t)$ 
on $\H$.
Besides, let the linear semigroup $L(t)$, generated
by the equation with $A_1\equiv 0$,
be exponentially stable on both spaces $\H$ and $\V$, i.e.\
$$\|L(t)x\|_{\H;\V}\leq M\e^{-\delta t}\|x\|_{\H;\V},\quad\forall x\in\H;\V,$$
for some $M\geq 1$, $\delta>0$.
Finally, suppose that~\eqref{DECO}-\eqref{BETA} hold,
with 
$\eta(t;x)=L(t)x$ and
$\zeta(t;x)$ solution to
$$
\begin{cases}
\frac{\d}{\d t}\zeta=A_0\zeta+A_1(\xi),\\
\noalign{\vskip.5mm}
\zeta(0)=0,
\end{cases}
$$
where $\xi(t)=S(t)x$ (actually, \eqref{ALFA}
follows directly from exponential stability).
This kind of decomposition
has been successfully employed several times (e.g.\ \cite{GM,GT}),
and typically works for subcritical problems.
Then, setting
$$V_x(t)y=L(t)y\and
U_x(t)z=L(t)z+\zeta(t;x),$$
hypotheses (i)-(iii) of Theorem~\ref{MAIN} are easily verified.
Hence, in contrast to the standard procedure,
our approach gives at once
the $\V$-boundedness of $\A$,
with no need of \eqref{UNIF}.
\end{example}

In general, a semigroup decomposition of the form \eqref{DECO},
complying with \eqref{ALFA}-\eqref{BETA}, can be much more complicated
(cf.\ \cite{ACH,PS}).
Nonetheless, whenever 
\eqref{DECO}-\eqref{BETA} occur,
we have a strong evidence that the conclusions of Theorem~\ref{MAIN} hold true,
as in the quite challenging case of the
strongly damped wave equation with 
critical nonlinearity, discussed below.

\section{A Concrete Application}

\noindent
Consider the semilinear strongly damped wave equation
in a smooth bounded domain $\Omega\subset\R^3$
subject to Dirichlet boundary conditions
\begin{equation}
\label{SDWE}
\begin{cases}
u_{tt}-\Delta u_t-\Delta u+\varphi(u)=f,\\
u_{|\partial\Omega}=0,
\end{cases}
\end{equation}
where $f\in L^2(\Omega)$ is independent of time,
and the nonlinear term $\varphi\in C(\R)$ satisfies the
critical growth condition
\begin{equation}
\label{GROW}
|\varphi(u)-\varphi(v)|\leq c|u-v|(1+|u|^4+|v|^4),
\end{equation}
and the standard dissipativity assumption
\begin{equation}
\label{DISS}
\liminf_{|u|\to\infty}\frac{\varphi(u)}{u}>-\lambda_1.
\end{equation}
Here and in the sequel, $c$ denotes some positive constant,
while $\lambda_1>0$ is the first eigenvalue of the linear operator $A=-\Delta$ on
$L^2(\Omega)$ with
$\D(A)=H^2(\Omega)\cap H^1_0(\Omega)$.

\subsection*{Notation}
For $r\in\R$,
we introduce the scale of Hilbert spaces
(we will always omit the index $r$ when $r=0$)
$$H^r={\D}(A^{r/2}),\quad \langle u,v\rangle_{r}=\langle
A^{r/2}u,A^{r/2}v\rangle_{L^2(\Omega)},\quad
\|u\|_r=\|A^{r/2}u\|_{L^2(\Omega)},$$
and we define the product spaces
$\H^r=H^{r+1}\times H^r$.

\medskip
Equation~\eqref{SDWE} generates a strongly continuous semigroup
$S(t)$ on $\H$ possessing the global attractor $\A$
(see \cite{CC,PS}). 
In fact, for a nonlinearity of (critical) growth 
of polynomial order $5$, the existence itself of the attractor
remained an open question
for a long time.
Replacing \eqref{DISS} with the more restrictive assumption
$$\varphi \in C^1(\R),\quad\liminf_{|u|\to\infty}\varphi'(u)> -\lambda_1,$$
the boundedness of $\A$ in $\H^1$ has been demonstrated in~\cite{PZsdwe},
by means of a ``parabolic" approach.
The same paper indicates the way to obtain the result also
within~\eqref{DISS}, by means of
a rather complicated scheme borrowed from~\cite{ZEL}.
This has been recently carried out in detail
by some other authors~\cite{SCD,YS}.

Our goal is a simpler proof, which does not actually
require anything more than what already contained in \cite{PS}. 
To this end, let us first recall
some results therein.

\smallskip
\noindent
$\diamond$ $S(t)$ has an absorbing set $\B_0\subset\H$. From
now on, $c_0>0$, $\nu_0>0$ and
$\CC_0\in\II$ will denote {\it generic} constants and a {\it generic} function,
respectively, depending only
on $\B_0$.

\smallskip
\noindent
$\diamond$ The following uniform estimate holds:
$$\sup_{x\in\B_0}\sup_{t\geq 0}\|S(t)x\|_\H\leq c_0.$$

\smallskip
\noindent
$\diamond$
For every $x\in\B_0$, the solution $S(t)x=(u(t),u_t(t))$ splits into the sum
$$\hat\eta(t)+\hat\zeta(t)=(\hat v(t),\hat v_t(t))+(\hat w(t),\hat w_t(t)),$$
where
\begin{equation}
\label{CTRL}
\|\hat\eta(t)\|_\H\leq c_0\e^{-\nu_0 t}
\and
\|\hat\zeta(t)\|_{\H^{1/4}}\leq \CC_0(t).
\end{equation}

\begin{remark}
\label{REMY}
In the same spirit of~\cite{ACH},
the proofs of \cite{PS} lean on the decomposition
$$\varphi(u)=\varphi_0(u)+\varphi_1(u),$$
where the continuous functions $\varphi_0$ and $\varphi_1$
satisfy \eqref{GROW} and \eqref{DISS}, respectively,
along with
$$
\varphi_0(u)u\geq 0\quad\text{and}\quad |\varphi_1(u)|\leq c(1+|u|).
$$
This is easily obtained noting that, from \eqref{DISS}, there are
$\sigma\geq0$ and $\lambda<\lambda_1$ such that
$$|u|\geq \sigma\quad\Rightarrow
\quad \varphi(u)u\geq -\lambda u.
$$
For instance, a compatible choice is
$$
\varphi_0(u)=\gamma(u)[\varphi(u)+\lambda u],\quad \varphi_1(u)=\varphi(u)-\varphi_0(u),
$$
for any continuous
$\gamma:\R\to[0,1]$, with $\gamma(u)=0$ if $|u|\leq \sigma$ and 
$\gamma(u)=1$ if $|u|> \sigma+1$.
\end{remark}

A standard Gronwall-type lemma will also be needed.

\begin{lemma}
\label{PAZ}
Let $\Lambda:\R^+\to\R^+$ be an absolutely continuous function
satisfying
$$
\frac{\d}{\d t}\Lambda(t)+\eps\Lambda(t)
\leq k\e^{-\nu t}\Lambda(t)+\CC(t),
$$
for some $\eps,\nu,k>0$ and some $\CC\in\II$.
Then,
$$\Lambda(t)\leq \e^{k/\nu}\e^{-\eps t}\Lambda(0)+\eps^{-1}\e^{k/\nu}\CC(t).
$$
\end{lemma}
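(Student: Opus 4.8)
The plan is to treat the hypothesis as a linear first-order differential inequality and to remove the time-dependent coefficient $k\e^{-\nu t}$ by an integrating factor. First I would rewrite the assumption as
$$\ddt\Lambda(t)+\big(\eps-k\e^{-\nu t}\big)\Lambda(t)\leq\CC(t),$$
and introduce
$$\mu(t)=\exp\Big(\eps t-\tfrac{k}{\nu}\big(1-\e^{-\nu t}\big)\Big),$$
which is tailored so that $\mu'(t)=\mu(t)\big(\eps-k\e^{-\nu t}\big)$ and $\mu(0)=1$. Multiplying the inequality by the strictly positive quantity $\mu(t)$ then collapses the left-hand side into an exact derivative, yielding
$$\ddt\big[\mu(t)\Lambda(t)\big]\leq\mu(t)\CC(t).$$

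Next I would integrate this from $0$ to $t$ and use $\mu(0)=1$ to get
$$\mu(t)\Lambda(t)\leq\Lambda(0)+\int_0^t\mu(s)\CC(s)\,\d s,$$
and, dividing by $\mu(t)>0$,
$$\Lambda(t)\leq\frac{\Lambda(0)}{\mu(t)}+\int_0^t\frac{\mu(s)}{\mu(t)}\CC(s)\,\d s.$$
The two terms are then estimated separately. For the first, since $0\leq 1-\e^{-\nu t}\leq 1$ one has $1/\mu(t)\leq\e^{k/\nu}\e^{-\eps t}$, which already produces the leading term $\e^{k/\nu}\e^{-\eps t}\Lambda(0)$.

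For the integral term I would compute
$$\frac{\mu(s)}{\mu(t)}=\exp\Big(\eps(s-t)+\tfrac{k}{\nu}\big(\e^{-\nu s}-\e^{-\nu t}\big)\Big),$$
and note that for $0\leq s\leq t$ the quantity $\e^{-\nu s}-\e^{-\nu t}$ lies in $[0,1]$, so that $\mu(s)/\mu(t)\leq\e^{k/\nu}\e^{-\eps(t-s)}$. Using the monotonicity $\CC(s)\leq\CC(t)$ for $s\leq t$ (here is exactly where $\CC\in\II$ enters) together with $\int_0^t\e^{-\eps(t-s)}\,\d s\leq\eps^{-1}$, the integral is bounded by $\eps^{-1}\e^{k/\nu}\CC(t)$; adding the two contributions gives the claim.

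The manipulation is entirely routine, and the only point needing a little care is the \emph{uniform-in-$t$} control of the integrating factor. The key observation is that both $\tfrac{k}{\nu}\big(1-\e^{-\nu t}\big)$ and $\tfrac{k}{\nu}\big(\e^{-\nu s}-\e^{-\nu t}\big)$ stay bounded by the single constant $k/\nu$ for all admissible $s,t$, which is precisely what makes the universal prefactor $\e^{k/\nu}$ appear while preventing any spurious growth in time. Everything else is a standard Gronwall-type estimate combined with the fact that $\CC$ is increasing.
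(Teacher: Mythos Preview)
Your argument is correct: the integrating-factor computation is carried out cleanly, and the two key bounds $\tfrac{k}{\nu}(1-\e^{-\nu t})\leq k/\nu$ and $\tfrac{k}{\nu}(\e^{-\nu s}-\e^{-\nu t})\leq k/\nu$ together with the monotonicity of $\CC$ are exactly what is needed. The paper does not actually supply a proof of this lemma (it is merely introduced as ``a standard Gronwall-type lemma''), so there is nothing to compare against; your write-up is precisely the routine verification the authors had in mind.
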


We are now in a position to state and prove

\begin{theorem}
\label{GGG}
The attractor $\A$ of the semigroup $S(t)$ on $\H$ is bounded in $\H^1$.
\end{theorem}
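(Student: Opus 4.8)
The strategy is to apply Theorem~\ref{MAIN} once, with $\V=\H^{1/4}$, and then to upgrade the regularity of $\A$ by a bootstrap exploiting its invariance. For the first step, denote by $L(t)$ the linear semigroup generated by $u_{tt}-\Delta u_t-\Delta u=0$, which is exponentially stable on every $\H^r$, say $\|L(t)\zeta\|_{\H^r}\le M\e^{-\delta t}\|\zeta\|_{\H^r}$. Mimicking Example~\ref{EX}, for $x\in\B_0$ I would set
$$V_x(t)y=L(t)y,\qquad U_x(t)z=L(t)z+\zeta(t;x),$$
where $\zeta(t;x)$ solves the same equation forced by $f-\varphi(u)$ with null data, $u$ being the first component of $S(t)x$. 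Since $L(t)$ is linear and $L(t)x+\zeta(t;x)=S(t)x$, hypothesis (i) holds for every splitting $y+z=x$; hypothesis (ii) is the exponential decay of $L(t)$ in $\H$; and hypothesis (iii) at the level $\H^{1/4}$ follows by adding to the $\H^{1/4}$-bound on $\hat\zeta=U_x(t)0=\zeta(t;x)$ in \eqref{CTRL} the contribution $\|L(t)z\|_{\H^{1/4}}\le M\e^{-\delta t}\|z\|_{\H^{1/4}}$, yielding (iii) with $\beta(t)=M\e^{-\delta t}\in\DD$ and $\CC=\CC_0\in\II$. As $\H^{1/4}$ is reflexive and compactly embedded in $\H$, Corollary~\ref{CORRI} then gives $\A$ bounded in $\H^{1/4}$.

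For the bootstrap I would use the following elementary but decisive remark: because $S(t)\A=\A$, every bound $\A\subset\BB_{\H^s}(\varrho_s)$ self-improves into the uniform-in-time estimate $\sup_{x\in\A}\sup_{t\ge0}\|S(t)x\|_{\H^s}\le\varrho_s$. This is the exact analogue, one notch higher, of the uniform $\H$-bound recalled from \cite{PS}, and it is precisely the ingredient needed to control the frozen forcing $\varphi(u)$ in a stronger norm. I would then run an induction over $s=\tfrac14,\tfrac12,\tfrac34$: assuming $\A$ bounded in $\H^s$, argue as in the proof of Proposition~\ref{MAIN2} with $\A$ in the role of the base set (note that $S(t_\star)\A\subset\A$, and that (ii) holds for all $y$ by linearity of $L$), keeping $V_x,U_x$ as above but now estimating $\|U_x(t_\star)z\|_{\H^{s+1/4}}$; a higher-order energy estimate processed through Lemma~\ref{PAZ} supplies (iii) at the level $\H^{s+1/4}$, whence $\A$ is bounded in $\H^{s+1/4}$. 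After three further steps this reaches the target $\H^1$.

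The heart of the matter, and the only genuinely hard point, is the higher-order energy estimate behind (iii), namely the control of $\|\zeta(t;x)\|_{\H^{s+1/4}}$ for $x\in\A$. Differentiating the equation brings in $\varphi'(u)\nabla u$, a term of critical (fourth-power) growth by \eqref{GROW}, which has to be absorbed by Gagliardo--Nirenberg interpolation using only the uniform bound $u\in H^{1+s}$ granted by invariance; at the lowest level, where $u$ barely lies in $H^1\subset L^6$, the estimate is truly critical and closes only in the non-uniform form $\CC(t)\in\II$ --- which is exactly why the abstract machinery, and not a direct uniform bound, is indispensable. Finally, I expect the procedure to terminate precisely at $\H^1$: since $f\in L^2=H^0$, the forcing cannot be smoothed beyond $H^0$, so $u$ cannot be pushed past $H^2$ nor $u_t$ past $H^1$, which is the optimal regularity $\H^1$.
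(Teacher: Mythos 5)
Your first step --- the application of Theorem~\ref{MAIN} with $\V=\H^{1/4}$ --- contains a genuine gap. You take the linear decomposition of Example~\ref{EX}, $U_x(t)z=L(t)z+\zeta(t;x)$ with $\zeta(t;x)$ solving the linearly damped equation forced by $f-\varphi(u)$ and null data, and you claim that the $\H^{1/4}$-bound in \eqref{CTRL} applies to this $\zeta(t;x)$ because $\hat\zeta=U_x(t)0$. But the pair $\hat\eta,\hat\zeta$ of \eqref{CTRL} comes from the decomposition of \cite{PS}, built on the splitting $\varphi=\varphi_0+\varphi_1$ of Remark~\ref{REMY}: there $\hat\eta$ solves a \emph{nonlinear} equation containing $\varphi_0(\hat v)$ (whose sign condition is what produces the decay), so $\hat\eta(t)\neq L(t)x$ and hence $\hat\zeta(t)\neq\zeta(t;x)$. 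For your $\zeta(t;x)$ the bound fails for the usual critical-exponent reason: $u\in H^1\subset L^6$ only gives $\varphi(u)\in L^{6/5}\subset H^{-1}$, whereas closing an energy estimate at the $\H^{1/4}$ level requires the forcing in $H^{-3/4}$, i.e.\ $\varphi(u)\in L^{4/3}$, i.e.\ $u\in L^{20/3}$ --- which $H^1$ does not provide. This is precisely why the paper says Example~\ref{EX} ``typically works for subcritical problems''. The actual proof instead defines $V_x(t)y=(v,v_t)$ via $v_{tt}+Av_t+Av=-v\psi(\hat v)$ (linear in $v$, with the exponentially decaying coefficient $\psi(\hat v)$ borrowed from \eqref{CTRL}) and $U_x(t)z=(w,w_t)$ with forcing $h=f-\varphi(u)+v\psi(\hat v)$; the algebraic rearrangement of $h$ cancels the critical part and yields $\|h\|_{L^{4/3}}\leq c_0\e^{-\nu_0 t}\|w\|_{5/4}+\CC_0(t)$, which is what closes hypothesis (iii). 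Your decoupled ansatz, in which the $z$-dependence of $U_x(t)z$ enters only through $L(t)$, cannot reproduce this cancellation.

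The bootstrap half of your argument is closer to workable but is both overbuilt and partly off target. Once $\A$ is bounded in $\H^{1/4}$, invariance gives $u\in H^{5/4}\subset L^{12}$ uniformly on the attractor, so by \eqref{GROW} $\varphi(u)$ is uniformly bounded in $L^2$ and the problem is genuinely \emph{subcritical}: the simple decomposition of Example~\ref{EX} then applies directly with $\V=\H^1$, and the paper concludes in a single step rather than your three quarter-steps. Moreover, your plan to ``differentiate the equation'', producing $\varphi'(u)\nabla u$, is not available here: the standing assumption is only $\varphi\in C(\R)$ ($\varphi\in C^1$ is invoked in the paper solely for the further $H^2\times H^2$ regularity), and in any case no Gagliardo--Nirenberg absorption of a critical term is needed after the first step. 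Your closing heuristic about why the regularity stops at $\H^1$ (because $f\in L^2$) is correct.
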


\begin{proof}
The first step is to apply the abstract result with $\V=\H^{1/4}$. 
To this aim, we decompose $\varphi$ as in Remark~\ref{REMY},
choosing $\sigma$ strictly positive.
Accordingly, $\varphi_0$ vanishes on the interval $[-\sigma,\sigma]$,
which allows us to write 
\begin{equation}
\label{JIM}
\varphi_0(u)=u\psi(u),\quad|\psi(u)|\leq c|u|^4.
\end{equation}
For $y,z\in\H$, we define
$$V_x(t)y=\eta(t)\and
U_x(t)z=\zeta(t),$$
where
$\eta(t)=(v(t),v_t(t))$
and $\zeta(t)=(w(t),w_t(t))$ solve the Cauchy problems
$$
\begin{cases}
v_{tt}+A v_t+A v=g,\\
\eta(0)=y,
\end{cases}
\quad
\begin{cases}
w_{tt}+A w_t+A w=h,\\
\zeta(0)=z,
\end{cases}
$$
having set
$$g=-v\psi (\hat v)\quad\text{and}
\quad h=f-\varphi(u)+v\psi(\hat v).
$$
Hypothesis (i) of Theorem~\ref{MAIN} holds
by construction, whereas
verifying (ii)-(iii) requires some passages.
By virtue of \eqref{CTRL}-\eqref{JIM}, the H\"older
inequality with exponents $(5,5/4)$
and the Sobolev embedding $H^{1}\subset L^{6}(\Omega)$,
\begin{equation}
\label{PRIMA}
\|g\|_{L^{6/5}(\Omega)}
\leq c\|v\|_1\|\hat v\|_1^4
\leq c_0\e^{-\nu_0 t}\|v\|_1.
\end{equation} 
Due to
\eqref{GROW}, \eqref{JIM} and the straightforward equality
$$
h=f-\varphi(u)+\varphi(\hat v)+\hat w\psi(\hat v)
-w\psi (\hat v)-\varphi_1(\hat v),
$$
we get
$$
|h|\leq |f|+c|\hat w|(1+|u|^4+|\hat v|^4)
+c|w||\hat v|^4+c(1+|\hat v|).
$$
Hence, making use of \eqref{CTRL}, the H\"older
inequality with exponents $(9,9/8)$
and the embeddings $H^{5/4}\subset L^{12}(\Omega)$
and $H^{1}\subset L^{6}(\Omega)$,
we obtain
\begin{align}
\label{SECONDO}
\|h\|_{L^{4/3}(\Omega)}
&\leq c\|f\|+c\|\hat w\|_{5/4}(1+\|u\|_1^4+\|\hat v\|_1^4)+c\|w\|_{5/4}\|\hat v\|_1^4
+c(1+\|\hat v\|_1)\\
\notag
&\leq c_0\e^{-\nu_0 t}\|w\|_{5/4}+\CC_0(t).
\end{align}
Then, as in \cite{PS}, we introduce the energy functionals
$$
\Lambda_0=\|\eta\|_\H^2+\eps\|v\|_1^2+2\eps\l v_t,v\r,\quad
\Lambda_1=\|\zeta\|_{\H^{1/4}}^2+\eps\|w\|_{5/4}^2+2\eps\l w_t,w\r_{1/4},
$$
with $\eps>0$ small enough in order for $\Lambda_0$ and $\Lambda_1$
to be equivalent to $\|\eta\|_\H^2$ and $\|\zeta\|_{\H^{1/4}}^2$,
respectively. From the equation for $v$ and
\eqref{PRIMA}, we infer
$$
\frac{\d}{\d t}\Lambda_0+2\eps\|v\|_1^2+2\|v_t\|_1^2-2\eps\|v_t\|^2
=2\l g,v_t+\eps v\r
\leq \|v_t\|_1^2+c_0\e^{-\nu_0 t}\|v\|_1^2.
$$
Clearly,
for $\eps$ sufficiently small,
$$
\frac{\d}{\d t}\Lambda_0+\eps\Lambda_0
\leq c_0\e^{-\nu_0 t}\Lambda_0,
$$
so that (ii) is a direct consequence of Lemma~\ref{PAZ}, which gives
$$\|V_x(t)y\|_\H^2\leq c_0\e^{-\eps t}\|y\|_\H^2.
$$
Likewise for (iii), exploiting~\eqref{SECONDO},
the H\"older
inequality with exponents $(4,4/3)$ and
the continuous embedding $H^{3/4}\subset L^{4}(\Omega)$, we
find
\begin{align*}
\frac{\d}{\d t}\Lambda_1+2\eps\|w\|_{5/4}^2+2\|w_t\|_{5/4}^2-2\eps\|w_t\|^2_{1/4}
&=2\l h,A^{1/4}w_t+\eps A^{1/4}w\r\\
&\leq \eps^2\|w\|_{5/4}^2
+\|w_t\|_{5/4}^2
+c_0\e^{-\nu_0 t}\|w\|_{5/4}^2+\CC_0(t).
\end{align*}
Therefore, taking $\eps$ small, we 
end up with the differential
inequality
$$
\frac{\d}{\d t}\Lambda_1+\eps\Lambda_1
\leq c_0\e^{-\nu_0 t}\Lambda_1+\CC_0(t),
$$
and a further application of Lemma~\ref{PAZ} provides the estimate
$$\|U_x(t)z\|_{\H^{1/4}}^2\leq c_0\e^{-\eps t}\|z\|_{\H^{1/4}}^2+\CC_0(t).
$$
By means of Theorem~\ref{MAIN}, we conclude that $\A$ is bounded in $\H^{1/4}$,
whereas the boundedness in $\H^1$ follows from
a standard bootstrap procedure. Indeed, on account of the obtained
$\H^{1/4}$-regularity,
the problem becomes in every respect {\it subcritical} for initial data
on the attractor.
In particular, $\|\varphi(u)\|$ is uniformly bounded, so that
the simple decomposition 
of Example~\ref{EX} applies, and the desired boundedness 
is drawn in one single step.
\end{proof}

As a matter of fact, Theorem~\ref{MAIN} in its full strength,
together with the transitivity
property of exponential attraction devised in~\cite{FGMZ}, yield a stronger result,
whose proof is left to the interested reader.

\begin{theorem}
\label{FFF}
There exist $\varrho>0$, $\omega>0$ and $\CC\in\II$ such that
$$
\dist_\H(S(t)\B,\BB_{\H^1}(\varrho))\leq \CC\big(\|\B\|_\H\big)\e^{-\omega t},
$$
for every bounded set $\B\subset\H$.
\end{theorem}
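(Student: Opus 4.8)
The plan is to obtain Theorem~\ref{FFF} by feeding the \emph{full strength} of Theorem~\ref{MAIN} into the transitivity lemma for exponential attraction of \cite{FGMZ}, so as to weld together the two regularity levels $\H^{1/4}$ and $\H^1$ that already appear in the proof of Theorem~\ref{GGG}. The first task is to promote the conclusion of Theorem~\ref{GGG} from the absorbing set $\B_0$ to an arbitrary bounded set $\B\subset\H$. The first application of Theorem~\ref{MAIN} furnishes a radius $\varrho_0>0$ and constants $K_0,\omega_0>0$ with $\dist_\H(S(t)\B_0,\BB_{\H^{1/4}}(\varrho_0))\leq K_0\e^{-\omega_0 t}$. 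For a bounded $\B$ with $\|\B\|_\H=R$, the absorbing property supplies an entering time $t_R$, increasing in $R$, with $S(t_R)\B\subset\B_0$; writing $S(t)\B=S(t-t_R)S(t_R)\B\subset S(t-t_R)\B_0$ for $t\geq t_R$ and bounding the distance crudely on the finite window $[0,t_R)$, where the trajectory stays bounded, one reaches
$$\dist_\H(S(t)\B,\BB_{\H^{1/4}}(\varrho_0))\leq \CC_1(\|\B\|_\H)\,\e^{-\omega_0 t},\qquad (\ast)$$
for a suitable $\CC_1\in\II$, which is precisely the all-bounded-sets form announced in the Introduction.

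Next I would establish an intermediate exponential attraction at the higher level: radii and rates $\varrho_1,K_1,\omega_1>0$ with
$$\dist_\H\big(S(t)\BB_{\H^{1/4}}(\varrho_0),\BB_{\H^1}(\varrho_1)\big)\leq K_1\,\e^{-\omega_1 t}.\qquad (\ast\ast)$$
For this I would run the abstract theorem a second time, now with $\H^{1/4}$ as base space and $\V=\H^1$. The decisive simplification is that, for data of $\H^{1/4}$-regularity, equation \eqref{SDWE} is in every respect subcritical, so that $\|\varphi(u)\|$ is uniformly bounded; consequently the elementary splitting of Example~\ref{EX}, built on the linear semigroup $L(t)$ generated by $u_{tt}+Au_t+Au=0$ (which is exponentially stable on both $\H^{1/4}$ and $\H^1$), verifies hypotheses (i)--(iii) of Theorem~\ref{MAIN}. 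Applying the theorem and downgrading the resulting $\H^{1/4}$-attraction to the weaker $\H$-metric via the continuous embedding $\H^{1/4}\hookrightarrow\H$ yields $(\ast\ast)$.

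Finally I would invoke the transitivity lemma of \cite{FGMZ} in the metric space $(\H,\dist_\H)$, applied to the three sets $\B$, $\BB_{\H^{1/4}}(\varrho_0)$ and $\BB_{\H^1}(\varrho_1)$ through $(\ast)$ and $(\ast\ast)$. Its only further hypothesis is the Lipschitz continuity with exponentially growing constant, $\|S(t)x_1-S(t)x_2\|_\H\leq c\,\e^{Lt}\|x_1-x_2\|_\H$ on bounded sets, which for \eqref{SDWE} follows from a routine Gronwall estimate. Because the transitivity constant is an increasing function of the prefactor in $(\ast)$, the $\II$-dependence on $\|\B\|_\H$ is inherited, and one concludes $\dist_\H(S(t)\B,\BB_{\H^1}(\varrho_1))\leq \CC(\|\B\|_\H)\,\e^{-\omega t}$, namely the assertion with $\varrho=\varrho_1$.

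The step I expect to be the main obstacle is $(\ast\ast)$: legitimately re-running the machinery at the $\H^{1/4}$ level requires that base space to be genuinely dissipative, so that an $\H^{1/4}$-absorbing set can be produced and hypotheses (ii)--(iii) verified uniformly over it. Once the $\H^{1/4}$-regularity secured in Theorem~\ref{GGG} is in hand this reduces to routine subcritical energy bookkeeping, but it is the true junction fusing the two regularity levels; by comparison, checking the Lipschitz estimate needed for \cite{FGMZ} is straightforward.
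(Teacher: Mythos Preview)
Your proposal is correct and follows precisely the route the paper indicates: the paper does not spell out a proof of Theorem~\ref{FFF} but merely points to ``Theorem~\ref{MAIN} in its full strength, together with the transitivity property of exponential attraction devised in~\cite{FGMZ}'', and your two-level application of Theorem~\ref{MAIN} (first $\H\to\H^{1/4}$, then $\H^{1/4}\to\H^1$ via the subcritical splitting of Example~\ref{EX}) glued by the transitivity lemma is exactly that scheme. Your identification of the $\H^{1/4}$-dissipativity as the one nontrivial ingredient is accurate, and as you note it is indeed routine once the problem has become subcritical.
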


\begin{remark}
In light of \cite[Lemma 3.6]{PZ},
Theorem~\ref{FFF} (and so Theorem~\ref{GGG}) is easily seen to hold
replacing $\H^1$ with the more regular space $H^2\times H^2$,
provided that $\varphi\in C^1(\R)$.
\end{remark}



\end{document}